\theoremstyle{plain}
\newtheorem{theorem}{Theorem}[section]
\theoremstyle{definition}
\newtheorem{definition}[theorem]{Definition}
\newtheorem{counter example}[theorem]{Counter Example}
\numberwithin{equation}{section}
\DeclareMathAlphabet{\mathscr}{OT1}{pzc}{m}{it} 
\begin{document}
	\Large{
		\title{ON A PROBLEM OF KURATOWSKI}
		
		\author[S. Basu]{Sanjib Basu}
		\address{\large{Department of Mathematics,Bethune College,181 Bidhan Sarani}}
		\email{\large{sanjibbasu08@gmail.com}}
		
		\author[A.C.Pramanik]{Abhit Chandra Pramanik}
		\address{\large{Department of Pure Mathematics, University of Calcutta, 35, Ballygunge Circular Road, Kolkata 700019, West Bengal, India}}
		\email{\large{abhit.pramanik@gmail.com}}

		\thanks{The second author thanks the CSIR, New Delhi – 110001, India, for financial support}
		\begin{abstract}
			If $f:X\mapsto Y$ is a function having Baire property from a metric space $X$ into a separable metric space $Y$, then $f$ is continuous except on a set of first category. Kuratowski asked whether the condition of separability could be removed. Several attempts were done in the past to solve this problem. In fact, the first impressive attempt was initiated by Kunugi. This paper is aimed towards solving the problem of Kuratowski in category bases which generalizes the result of Kunugi.  
		\end{abstract}
		\subjclass[2020]{28A05, 54A05, 54E52}
		\keywords{Category bases, meager sets, non-Baire sets, pseudo base, Marczewski base, $s_0$-sets}
		\thanks{}
		\maketitle
		
		\section{INTRODUCTION}
		Kuratowski [5], using continuum hypothesis, proved the following 
		\begin{theorem}
			If $\mathcal{F}$, $|\mathcal{F}|\leq\omega_1$ is a partition  of [0,1] into meager sets, then there exists a family $\mathcal{A}\subseteq\mathcal{F}$ such that $\bigcup\mathcal{A}$ has not the Baire property.
		\end{theorem} 
	 The measure analogue of the above result was known to Solovay who established the same using generic ultrafilter. Bukovsky [1] using forcing argument gave a much shorter and less complicated proof of Solovay's result. However, none of them used continuum hypothesis. Using measure category duality, one can easily transcibe the proof of Bukovsky's theorem into its dual statement which is Theorem 1.1.\\
	 
	 A generalization of Theorem 1.1 appeared in [2]. There Emeryk, Frankiewicz and Kulpa proved that $-$	  
		\begin{theorem}
			Let $X$ be a $\check{C}$ech complete space of weight $\leq 2^\omega$ and $\mathcal{F}$ be a partition of $X$ into meager sets. Then there exists a subfamily $\mathcal{A}\subseteq\mathcal{F}$ such that $\bigcup\mathcal{A}$ has not the Baire property.
		\end{theorem}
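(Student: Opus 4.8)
The plan is to reduce the failure of the Baire property to a two-sided \emph{Bernstein-type splitting} of the partition. Write $\mathcal{F}=\{A_\gamma:\gamma\in\Gamma\}$ and, for $\Delta\subseteq\Gamma$, put $B_\Delta=\bigcup_{\gamma\in\Delta}A_\gamma$. Since $X$ is \v{C}ech complete it is a Baire space, so the category algebra is well behaved and I may use the following localisation: a set $B$ fails to have the Baire property as soon as there is a nonempty open $W$ in which $B$ splits everywhere, meaning that $B\cap V$ and $V\setminus B$ are both non-meager for every nonempty open $V\subseteq W$. A convenient reformulation is that $B$ is non-meager in every nonempty open set if and only if $B$ meets $V\cap G$ for every nonempty open $V$ and every dense $G_\delta$ set $G$: if $B$ missed such a $V\cap G$ then $B\cap V$ would lie in the meager set $X\setminus G$, and conversely. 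Thus it suffices to produce $\Delta$ so that both $B_\Delta$ and its complement $B_{\Gamma\setminus\Delta}$ meet every set of the form $V\cap G$; crucially, meeting such a target is a single-point, hence single-piece, requirement, and this is what makes a transfinite construction feasible.

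First I would dispose of trivialities and fix the combinatorial data. If $X$ had an isolated point $x$, then $\{x\}$ would be open and non-meager, yet contained in some meager $A_\gamma$, which is impossible; so the hypothesis forces $X$ to be dense-in-itself, and I may assume this throughout. Next I use the weight bound: because the weight of $X$ is at most $2^\omega$, a base extracted from the complete sequence of open covers witnessing \v{C}ech completeness yields a cofinal family of nowhere dense sets of size at most $2^\omega$, and correspondingly a family $\{T_\xi:\xi<2^\omega\}$ of comeager-in-open targets that is cofinal, in the sense that every $V\cap G$ contains some $T_\xi$. Each $T_\xi$ is non-meager, hence meets infinitely many members of $\mathcal{F}$. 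I would then construct $\Delta$ by recursion on $\xi<2^\omega$: at stage $\xi$ choose indices $\gamma_\xi\neq\delta_\xi$, not yet committed, with $A_{\gamma_\xi}\cap T_\xi\neq\emptyset$ and $A_{\delta_\xi}\cap T_\xi\neq\emptyset$, placing $\gamma_\xi$ into $\Delta$ and keeping $\delta_\xi$ out of it, and extend the two index sets arbitrarily at the end. The resulting $B_\Delta$ meets every $T_\xi$, and so does its complement; hence both are everywhere non-meager and $B_\Delta$ lacks the Baire property.

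The hard part will be keeping this recursion alive through all $2^\omega$ stages, that is, guaranteeing at each stage a member of $\mathcal{F}$ meeting the current target that has not already been committed to the wrong side. This is automatic once every comeager target is split into continuum many pieces, but in ZFC a non-meager set can be a union of as few as $\mathrm{add}(\mathcal{M})=\aleph_1$ meager sets, so if $\aleph_1<2^\omega$ a target might exhaust its pieces before its stage is reached. Overcoming this is where the two hypotheses must be pushed. The route I expect to work is to refine the target family using \v{C}ech completeness directly: from the complete sequence of covers one builds a Cantor scheme $\{U_s:s\in 2^{<\omega}\}$ of nonempty open sets with shrinking and disjointly nested closures, so that the branch intersections are nonempty and sweep out a cofinal supply of comeager sets, each genuinely splitting into continuum many pieces, which restores the supply needed at every stage. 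An attractive alternative, matching the Bukovsky-style forcing proof of the measure analogue cited above, is to bypass the bookkeeping by forcing with the regular open algebra $\mathrm{RO}(X)$, reading off a Cohen-generic point, and running an absoluteness argument against the assumption that every $B_\Delta$ has the Baire property; here \v{C}ech completeness guarantees that the generic filter determines a genuine point, while the weight bound keeps the forcing coded by the partition of manageable size. Either way the reflection, or \emph{supply}, step is the crux, the remainder being the routine reduction described above.
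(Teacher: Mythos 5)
This statement is Theorem 1.2 of the paper, quoted from Emeryk, Frankiewicz and Kulpa [2] as background; the paper contains no proof of it (the argument developed in Section 2 is the countable-pseudobase analogue of Theorem 1.6 and does not reach weight $2^\omega$), so your attempt has to stand on its own. Your reduction is sound as far as it goes: in a Baire space, a set whose trace on every nonempty open $V\subseteq W$ is non-meager with non-meager complement fails the Baire property, and ``everywhere non-meager'' is correctly rephrased as ``meets every $V\cap G$ with $G$ a dense $G_\delta$.''

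The proof breaks at the two places where the hypotheses must actually be used, and you have not repaired either. First, the target family $\{T_\xi:\xi<2^\omega\}$ need not exist: a base of size $\leq 2^\omega$ bounds the closed nowhere dense sets (hence the meager $F_\sigma$ sets, which are what is cofinal in the meager ideal) only by $2^{2^\omega}$, so at weight $2^\omega$ there is in general no cofinal family of comeager targets of size $2^\omega$, and nothing in your extraction ``from the complete sequence of covers'' produces one; the counting only works in the second countable setting of Theorem 1.6. Second, even granting such a family, the recursion can die exactly as you fear: a target is only known to be non-meager, hence to meet uncountably many pieces, not $2^\omega$ many, and all pieces meeting it may have been committed to one side before its stage. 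You correctly identify this as the crux, but the two proposed repairs --- a Cantor scheme built from the complete sequence of covers, or forcing with $\mathrm{RO}(X)$ plus absoluteness --- are each a single sentence, and the key assertion of the first (that every target ``genuinely splits into continuum many pieces'') is precisely what would have to be proved and is not true of an arbitrary comeager set. So what you have is a correct localisation plus an honest description of the missing step, not a proof. For contrast, the arguments that do work here (EFK's, and the Grzegorek--Labuda argument that Section 2 of this paper recasts in category bases) avoid the length-$2^\omega$ bookkeeping entirely: they write each meager piece as a countable union of singular (nowhere dense) sets, pigeonhole over a countable pseudobase of a suitable region, and directly exhibit one region $O$ and one split of the index set with both halves everywhere of second category in $O$; the transfinite, requirement-by-requirement Bernstein construction is the wrong engine for this theorem without CH.
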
 
		
		The above attempt to establish an improved version of Kuratowski's result was directed towards solving a 1935 problem of Kuratowski. It is already known that if $X$ is a metric space, $Y$ is a separable metric space and $f:X\mapsto Y$ is a function having Baire property, then $f$ is continuous on the complement of a set of first category. Kuratowski [6]  asked whether the same is true if $Y$ is not separable. Using the above theorem, Emeryk, Frankiewicz and Kulpa [3] attempted to answer this question. 
		\begin{theorem}
			Let $X$ be a $\check{C}$ech complete metric space of weight less than or equal to $2^{\aleph_0}$. If $f:X\mapsto Y$ is a function having Baire property from $X$ into a metric space $Y$, then $f$ is continuous on the complement of a set of first category.
		\end{theorem}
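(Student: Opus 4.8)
The plan is to argue by contradiction: from a hypothetical non-meager set of discontinuities I would manufacture, on a nonempty open piece of $X$, a partition into meager sets to which Theorem 1.2 applies, and then derive a contradiction with the Baire property of $f$. Before doing so I would isolate why the weight hypothesis is the decisive ingredient. A Čech complete metric space of weight $\le 2^{\aleph_0}$ has cardinality $\le 2^{\aleph_0}$, so $|f(X)|\le|X|\le 2^{\aleph_0}$; since in a metric space the weight equals the density and the density is at most the cardinality, the range $Y_0=f(X)$ has weight $\le 2^{\aleph_0}$ and hence admits a base $\{V_\xi:\xi<\kappa\}$ with $\kappa\le 2^{\aleph_0}$. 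This is precisely the cardinality regime in which Theorem 1.2 operates, and it is what renders the non-separable range accessible.

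Next I would reduce the assertion to the meagerness of a single set. Because $f$ has the Baire property, the oscillation $\mathrm{osc}_f$ is Baire measurable (each level set $\{x:\mathrm{osc}_f(x)\ge t\}$ arises from the Baire-property preimages $f^{-1}(V_\xi)$ by countable Boolean operations), so the discontinuity set $D=\bigcup_n\{x:\mathrm{osc}_f(x)\ge 1/n\}$ has the Baire property. It thus suffices to show $D$ is meager. Assume not: then for some $n$ the set $\{x:\mathrm{osc}_f(x)\ge 1/n\}$ is non-meager, and having the Baire property it is comeager in a nonempty open $W\subseteq X$, which is again a Čech complete metric space of weight $\le 2^{\aleph_0}$. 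Write $\varepsilon=1/n$, so that $\mathrm{osc}_f\ge\varepsilon$ on a comeager subset of $W$.

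Now I would build the partition. Choosing a maximal $(\varepsilon/4)$-separated set $\{y_\alpha:\alpha\in A\}$ in $Y_0$ and its Borel Voronoi cells $P_\alpha$ (so the $P_\alpha$ partition $Y_0$ with $\mathrm{diam}\,P_\alpha<\varepsilon$), set $E_\alpha=W\cap f^{-1}(P_\alpha)$. Each $E_\alpha$ has the Baire property, since $P_\alpha$ is Borel and preimages of Borel sets under a Baire-property map are again Baire-property. Moreover each $E_\alpha$ is meager: were some $E_\alpha$ non-meager it would be comeager in a nonempty open $U\subseteq W$, forcing $f$ to take values in a set of diameter $<\varepsilon$ on a comeager subset of $U$, hence $\mathrm{osc}_f<\varepsilon$ comeagerly on $U$, contradicting $\mathrm{osc}_f\ge\varepsilon$ comeagerly on $W\supseteq U$. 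Thus $\{E_\alpha:\alpha\in A\}$ is a partition of $W$ into meager sets, and Theorem 1.2 delivers a subfamily $\mathcal A\subseteq A$ such that $B=\bigcup_{\alpha\in\mathcal A}E_\alpha$ lacks the Baire property.

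The heart of the matter, and the step I expect to be the main obstacle, is to contradict this by showing that every such subunion $B=W\cap f^{-1}(S)$, with $S=\bigcup_{\alpha\in\mathcal A}P_\alpha$, actually does have the Baire property. The naive route is to sandwich $S$ between the open hull $O=\bigcup_{\alpha\in\mathcal A}B(y_\alpha,\varepsilon/2)$ and $S$ itself, so that $f^{-1}(O)\cap W$ is Baire-property (a preimage of an open set), and to hope that the error $f^{-1}(O\setminus S)\cap W$ is meager; but this fails, because the cells have metrically thick boundaries and $O\setminus S$ absorbs parts of cells $P_\gamma$ with $\gamma\notin\mathcal A$, whose preimages aggregate to a possibly non-meager set. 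The genuine difficulty is therefore to refine the selection so that the chosen subfamily matches an open subset of $Y_0$ up to a controllably meager error: I would invoke Montgomery's lemma to represent each $f^{-1}(V_\xi)$ as an open set modulo a meager set, exploit the $(\varepsilon/4)$-separation to keep the discarded boundary pieces disjoint, and use the Baire-category regularity supplied by the Čech completeness of $W$ to assemble these errors into a single meager set rather than an uncontrolled union of $2^{\aleph_0}$ meager sets. Carrying out this matching, and thereby forcing $B$ to have the Baire property in contradiction with Theorem 1.2, yields the meagerness of $D$ and completes the proof.
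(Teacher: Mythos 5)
The paper does not actually prove this statement: Theorem 1.3 is quoted from Emeryk's 1979 paper [3] as background, and the closest the present paper comes to it is Theorem 2.6, whose hypothesis $(\star)(\star)$ encodes, via Montgomery's lemma, a $\sigma$-discrete decomposition of the range space. Measured against that route (and against the classical argument), your proposal has two genuine gaps.

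First, your reduction is to a false statement. You propose to show that the set $D$ of points where $f$ itself is discontinuous is meager. But the theorem asserts only that the \emph{restriction} $f_{|X\setminus P}$ is continuous for some meager $P$; already for $X=Y=\mathbb{R}$ the characteristic function of $\mathbb{Q}$ has the Baire property and is discontinuous everywhere, so $D=\mathbb{R}$ is non-meager while the theorem holds trivially (restrict to the irrationals). Relatedly, your argument that each Voronoi preimage $E_\alpha$ is meager breaks at the step ``$f$ takes values in a set of diameter $<\varepsilon$ on a comeager subset of $U$, hence $\mathrm{osc}_f<\varepsilon$ comeagerly on $U$'': the oscillation of $f$ at a point is governed by \emph{all} values of $f$ near that point, including those on the exceptional meager set, so no bound on $\mathrm{osc}_f$ follows. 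The correct shape of the argument is to first manufacture the meager set $P$ (as the union of the error sets $F\cup T$ in the Baire-property decompositions of preimages of basic open sets, exactly as in the proof of Theorem 2.6) and only then discuss continuity of $f_{|X\setminus P}$.

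Second, the step you yourself flag as ``the heart of the matter'' is not carried out, and it cannot be carried out with Voronoi cells. To contradict Theorem 1.2 you must show that \emph{every} subunion $\bigcup_{\alpha\in\mathcal A}E_\alpha=W\cap f^{-1}\bigl(\bigcup_{\alpha\in\mathcal A}P_\alpha\bigr)$ has the Baire property, which requires the subunion $\bigcup_{\alpha\in\mathcal A}P_\alpha$ to be Borel (or at least to have a Baire-property preimage). An uncountable subunion of Voronoi cells of an $\varepsilon/4$-separated set has no reason to be Borel; the cells partition $Y_0$ but do not form a discrete family, and this is precisely why the literature (Kunuji, Grzegorek--Labuda, and condition $(\star)(\star)$ here) replaces the partition by a $\sigma$-discrete base or refinement obtained from Stone's theorem: for a \emph{discrete} family every subunion of closures is closed, hence Borel, so every subunion pulls back to a Baire set, and the countably many meager error sets indexed by $n$ then assemble into one meager $P$. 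Your closing paragraph gestures at Montgomery's lemma but does not perform this replacement, so the contradiction with Theorem 1.2 is never actually reached.
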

		
		Only recently, it came to light (thanks to the work of Grzegorek and Labuda with historical overview) that an attempt to solve the Kuratowski problem originated much earlier in a paper of 1936 by Kinjiro Kunugi [7] and was based on a theorem of Luzin and Novikov. Kunugi imposed the following restriction on the space $X$$-$\\
		
		`` If given an ordinal number $\gamma$, \{$A_\gamma\}_{\xi<\gamma}$ is a family of mutually disjoint first category sets such that their union $\bigcup\limits_{\xi<\gamma}A_\xi$ is a set of second category, then there exist an open set $O$ and two disjoint unions $\bigcup\limits_{\xi'}A_{\xi'}$ and $\bigcup\limits_{\xi''}A_{\xi''}$ of subfamilies each of which is everywhere of second category in $O$ "\\
		
		and proved that $-$
		\begin{theorem}
			If $X$ is a topological space satisfying the above restriction and $f:X\mapsto Y$ is a function having Baire property from $X$ into a metric space $Y$, then $f$ is continuous except on a set of first category in $X$.
		\end{theorem}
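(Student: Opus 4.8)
The plan is to run the classical separable-space argument but with the countable base replaced by a $\sigma$-discrete base, invoking Kunuji's restriction at precisely the one point where second countability is otherwise indispensable. Recall that $f$ having the Baire property means $f^{-1}(V)$ has the Baire property for every open $V\subseteq Y$, so we may write $f^{-1}(V)=G_V\triangle M_V$ with $G_V$ open and $M_V$ meager. Since every metric space admits a $\sigma$-discrete base (Bing's metrization theorem), fix such a base $\mathcal{B}=\bigcup_{m}\mathcal{B}_m$ with each $\mathcal{B}_m$ discrete. I claim it suffices to prove that $P:=\bigcup_{V\in\mathcal{B}}M_V$ is of first category: once this is known, the usual verification gives continuity of $f$ on $X\setminus P$. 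Indeed, if $x_0\in X\setminus P$ and $\varepsilon>0$, choosing $V\in\mathcal{B}$ with $f(x_0)\in V\subseteq B(f(x_0),\varepsilon)$ places $x_0$ in the open set $G_V$, and every point of $G_V\cap(X\setminus P)\subseteq G_V\setminus M_V\subseteq f^{-1}(V)$ is mapped by $f$ into $V$, which yields continuity at $x_0$ relative to $X\setminus P$.

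As $P$ is a countable union over $m$, it is enough to show that $\bigcup_{V\in\mathcal{B}_m}M_V$ is meager for each fixed $m$; fix such an $m$ and write $\mathcal{B}_m=\{V_\xi\}_{\xi<\gamma}$. Being discrete, the $V_\xi$ are pairwise disjoint, so the preimages $f^{-1}(V_\xi)$ are pairwise disjoint, and a short Baire-category computation shows the open kernels $G_{V_\xi}$ are pairwise disjoint as well (their pairwise intersections are open and meager, hence empty). The difficulty is that an individual $M_{V_\xi}$ is uncontrollable and the union may a priori be large; this is exactly the non-separable obstruction. I would therefore split each exceptional set as $M_{V_\xi}=P_\xi\sqcup Q_\xi$ with $P_\xi=f^{-1}(V_\xi)\setminus G_{V_\xi}$ and $Q_\xi=G_{V_\xi}\setminus f^{-1}(V_\xi)$. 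Because $\{f^{-1}(V_\xi)\}$ and $\{G_{V_\xi}\}$ are each disjoint families, $\{P_\xi\}_{\xi<\gamma}$ and $\{Q_\xi\}_{\xi<\gamma}$ are two transfinite disjoint families of first category sets, and it suffices to prove that $\bigcup_\xi P_\xi$ and $\bigcup_\xi Q_\xi$ are both meager.

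Here is where the restriction enters. Suppose, for contradiction, that $\bigcup_\xi P_\xi$ is of second category. Then $\{P_\xi\}_{\xi<\gamma}$ is exactly a family to which the restriction applies, so there exist a nonempty open set $O$ and two subfamilies indexed by disjoint sets $I',I''\subseteq\gamma$ such that $\bigcup_{\xi\in I'}P_\xi$ and $\bigcup_{\xi\in I''}P_\xi$ are each everywhere of second category in $O$. Put $W'=\bigcup_{\xi\in I'}V_\xi$ and $W''=\bigcup_{\xi\in I''}V_\xi$; these are disjoint open subsets of $Y$, so $f^{-1}(W')$ and $f^{-1}(W'')$ are disjoint sets, both with the Baire property. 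Since $\bigcup_{\xi\in I'}P_\xi\subseteq f^{-1}(W')$ is everywhere of second category in $O$, the Baire property forces the open kernel of $f^{-1}(W')$ to be dense in $O$, and likewise for $f^{-1}(W'')$. But as $f^{-1}(W')$ and $f^{-1}(W'')$ are disjoint, the intersection of their open kernels is an open meager set, hence empty; yet being the intersection of two sets dense in $O$ it must be dense in the nonempty open set $O$ and so nonempty — a contradiction. Thus $\bigcup_\xi P_\xi$ is meager. The same argument applied to $\{Q_\xi\}$, using the disjoint open sets $\bigcup_{\xi\in I'}G_{V_\xi}$ and $\bigcup_{\xi\in I''}G_{V_\xi}$ in place of $W',W''$, shows $\bigcup_\xi Q_\xi$ is meager, completing the reduction.

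I expect the main obstacle to be conceptual rather than computational. One must recognize that the correct surrogate for the countable base is a $\sigma$-discrete base, and that on each discrete layer the exceptional sets can be disjointified into $P_\xi$ and $Q_\xi$ so that Kunuji's restriction becomes applicable; the naive oscillation argument fails precisely because the meager sets $M_V$ accumulate over an uncountable index set. The delicate technical points are the implication \emph{everywhere second category} together with the \emph{Baire property} gives a \emph{dense open kernel}, and the use of the fact that a nonempty open subset of $X$ cannot be meager, which is what converts ``two disjoint dense open sets'' into a genuine contradiction; this Baire-type nondegeneracy of $X$ is implicit in the category language of the restriction.
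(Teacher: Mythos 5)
Your argument is essentially correct, but note first that the paper does not actually prove Theorem 1.5: it is quoted from Kunuji (via Grzegorek--Labuda), and the paper's own contribution is the category-base generalization proved as Theorem 2.6. Your proof is the classical one, and it runs structurally parallel to the proof of Theorem 2.6: you replace the countable base of $Y$ by a $\sigma$-discrete one (the paper imports the same fact through the Montgomery lemma hidden in condition $(\star)(\star)$), split the exceptional set on each discrete layer into the two transfinite families $P_\xi=f^{-1}(V_\xi)\setminus G_{V_\xi}$ and $Q_\xi=G_{V_\xi}\setminus f^{-1}(V_\xi)$ (the analogues of the sets $T_{n,k}^{(\xi)}$ and $F_{n,k}^{(\xi)}$ in the paper), and kill each union using the partition hypothesis. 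The differences are local: the paper disposes of the $F$-half directly from the disjointness of the regions $M_{n,k}^{(\xi)}$, with no appeal to a partition theorem there, and it phrases the contradiction for the $T$-half as ``a subunion that is non-Baire by Corollary 2.5 yet Baire because it equals $S_{n,k}\cap f^{-1}(\text{Borel})$,'' whereas you phrase it as ``two disjoint open kernels both dense in $O$.'' These are the same contradiction in different clothing; your version stays entirely inside classical Baire category and uses the Kunuji restriction exactly in the form stated, which is closer to Kunuji's original setting.

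Two points need tightening. First, your claim that $G_{V_{\xi_1}}\cap G_{V_{\xi_2}}$ is empty because it is ``open and meager'' presupposes that no nonempty open subset of $X$ is meager, which the Kunuji restriction does not guarantee; consequently the $Q_\xi$ need not be pairwise disjoint as written. This is harmless --- disjointify them transfinitely (each $Q_\xi\setminus\bigcup_{\eta<\xi}Q_\eta$ is still meager and the union is unchanged), or first delete the union of all open meager sets, which is meager by the Banach category theorem. Second, the final contradiction requires a nonempty open subset of $O$ to be nonmeager; you are saved here not by any global Baire property of $X$ but by the conclusion of the restriction itself: since $\bigcup_{\xi\in I'}P_\xi$ is everywhere of second category in $O$, every nonempty open subset of $O$ is automatically of second category. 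Making that one sentence explicit (and using it again for $U'\cap U''\cap O$ in the $Q$-half, where the intersection is a possibly uncountable union of open meager sets and so needs the Banach category theorem to be declared meager) closes the argument.
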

		In the theorem stated below, Grzegorek and Labuda [4] showed that the relevant topological fact which implies the validity of Kunugi's assumption is that the space should have a countable base of open sets, or, in otherwords, should be second coutable.
		\begin{theorem}
			Let \{$A_\gamma:\gamma\in\Gamma$\} ($\Gamma$ is an index set) be a family of mutually disjoint first category subsets of a second countable topological space $X$. If the union $E=\bigcup\limits_{\gamma\in\Gamma}A_\gamma$ is a set of second category, then there exist $\Delta\subseteq\Gamma$ and an open set $O$ such that both $\bigcup\limits_{\gamma\in\Delta}A_\gamma$ and $\bigcup\limits_{\gamma\in{\Gamma\setminus\Delta}}A_\gamma$ are everywhere of second category in $O$. As a consequence, $\bigcup\limits_{\gamma\in\Delta}A_\gamma$ and $\bigcup\limits_{\gamma\in{\Gamma\setminus\Delta}}A_\gamma$ cannot be separated by sets having Baire property. 
		\end{theorem}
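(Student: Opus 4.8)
The plan is to split the argument into a localization step, a combinatorial heart, and a short deduction of the non-separation consequence. First I would localize using second countability and the Banach category theorem (this is the role of Montgomery's lemma here). Fixing a countable base $\{B_m\}$, set $O^\ast=\bigcup\{B_m: E\cap B_m\text{ is meager}\}$. As this is a \emph{countable} union, $E\cap O^\ast$ is meager; since $E$ is of second category, $O^\ast\neq X$, and in fact $O^\ast$ is not dense (otherwise $X\setminus O^\ast$ would be nowhere dense and $E\subseteq (E\cap O^\ast)\cup(X\setminus O^\ast)$ would be meager). Hence $O:=X\setminus\overline{O^\ast}$ is a nonempty open set on which $E$ is everywhere of second category: for every basic $B\subseteq O$ one has $B\cap O^\ast=\emptyset$, so $E\cap B$ is non-meager. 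Since any nonempty open subset of $O$ contains some basic $B\subseteq O$, the target property reduces to a \emph{countable} list of conditions: it suffices to produce $\Delta$ so that, for each basic $B\subseteq O$, both $\bigcup_{\gamma\in\Delta}A_\gamma$ and $\bigcup_{\gamma\in\Gamma\setminus\Delta}A_\gamma$ are non-meager in $B$. For a single $B$ at least one side is automatically non-meager, because $E\cap B$ is non-meager and cannot be the union of the two meager traces; the whole difficulty is to force \emph{both} sides, for \emph{all} $B$, simultaneously.

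For the heart I would obtain $\Delta$ by a genericity argument on the space of colourings $2^{\Gamma}$. For fixed basic $B\subseteq O$ put $\mathcal B_B=\{\Delta:\bigcup_{\gamma\in\Delta}A_\gamma\cap B\text{ is meager}\}$. Adjoining or deleting a single index only adjoins or deletes one meager piece $A_\gamma$, which does not affect whether the union is meager; thus $\mathcal B_B$ is invariant under finite coordinate changes, i.e. a tail set. A zero--one law (Kolmogorov for the coin-flipping measure, or its Baire-category analogue on the compact group $2^{\Gamma}$, using that the finite-support flips are dense) then forces $\mathcal B_B$ to be negligible or co-negligible. Its mirror $\mathcal B_B'$, the event that the \emph{complementary} side is meager in $B$, is disjoint from $\mathcal B_B$ and is carried to it by the measure-- and category-preserving global flip $\Delta\mapsto\Gamma\setminus\Delta$; as they cannot both be co-negligible, both are negligible. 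Intersecting the countably many co-negligible complements over all basic $B\subseteq O$ yields a generic $\Delta$ for which both unions are non-meager in every such $B$, hence everywhere of second category in $O$.

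The hard part is precisely the hypothesis the zero--one law needs: that each $\mathcal B_B$ is measurable (respectively has the Baire property) in $2^{\Gamma}$. This is delicate because $\Gamma$ may be uncountable---indeed $E$ cannot be written as a countable union of meager pieces, so the index set is genuinely large---and an arbitrary tail set in $2^{\Gamma}$ need not be measurable. Writing $\Gamma_H=\{\gamma:A_\gamma\cap H\neq\emptyset\}$, one checks $\mathcal B_B=\bigcup_H\{\Delta:\Delta\cap\Gamma_H=\emptyset\}$, the union taken over comeager $G_\delta$ sets $H\subseteq B$; each summand is a nowhere-dense ``subcube'' (it fixes the uncountably many coordinates in $\Gamma_H$ to $0$), but the union is uncountable, so meagerness is not automatic. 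I expect taming this union to be the technical core, and this is where second countability of $X$ must be used essentially: to replace the family of witnesses $H$ by a definable, analytic parametrization drawn from the countable base, thereby endowing $\mathcal B_B$ with the Baire property so the zero--one law applies. The alternative---a transfinite recursion killing every meager $F_\sigma$ ``barrier'' on both sides---is what I would try to avoid, since it runs into an additivity obstruction (there can be more barriers than indices), precisely the pitfall the genericity argument is designed to bypass.

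Finally, the stated consequence is routine once both sides are everywhere of second category in $O$. Suppose a set $H$ with the Baire property separated them, and write $H=U\triangle M$ with $U$ open and $M$ meager. If $U\cap O\neq\emptyset$, pick a basic $B\subseteq U\cap O$; then $\bigcup_{\gamma\in\Gamma\setminus\Delta}A_\gamma\cap B\subseteq (X\setminus H)\cap B\subseteq M$ is meager, contradicting that this side is of second category in $B$. If instead $U\cap O=\emptyset$, then $H\cap O\subseteq M$ is meager, so $\bigcup_{\gamma\in\Delta}A_\gamma\cap O\subseteq H\cap O$ is meager, again a contradiction. Hence no such $H$ exists, and the two unions cannot be separated by a set having the Baire property.
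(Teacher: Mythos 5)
Your localization step (passing to an open $O$ in which $E$ is everywhere of second category via the countable base) and your closing deduction of the non-separation consequence are both correct. But the heart of your argument has a genuine gap, and you acknowledge it yourself: the zero--one law on $2^{\Gamma}$ applies only to tail sets that are measurable, respectively have the Baire property, with respect to the product structure, and you never establish this for the events $\mathcal B_B=\{\Delta:\bigcup_{\gamma\in\Delta}A_\gamma\cap B\text{ is meager}\}$. Since $\Gamma$ is genuinely uncountable here, an arbitrary finite-change-invariant subset of $2^{\Gamma}$ need not have the Baire property, and your representation of $\mathcal B_B$ as an uncountable union of ``subcubes'' indexed by comeager $G_\delta$ witnesses $H$ does nothing to tame it; the promised ``definable, analytic parametrization'' is exactly the missing content. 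In other words, everything that makes the theorem nontrivial has been deferred to an unproved measurability claim, so the proposal does not constitute a proof.

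The paper's route (carried out in the more general category-base setting of Theorem 2.7/Corollary 2.8, which specializes to this statement) avoids $2^{\Gamma}$ entirely. It well-orders the index set with the least order type $\Omega$ for which an initial-segment union is of second category, so that every initial segment $B_\alpha=\bigcup_{\beta<\alpha}A_\beta$ is meager and can be written as $\bigcup_n B_\alpha^{(n)}$ with $B_\alpha^{(n)}$ nowhere dense. It then uses the countable base twice: first to find a basic set $D_{m_0}$, an integer $n_0$, and a set $T_{n_0,m_0}$ of points $y$ for which $\bigcup\{A_\alpha:y\in B_\alpha^{(n_0)}\}$ is everywhere of second category in $D_{m_0}$, with $T_{n_0,m_0}$ itself everywhere of second category somewhere; and second to find a basic $P_{k_0}$ in which $M_{k_0}=\bigcup\{A_\alpha:B_\alpha^{(n_0)}\cap P_{k_0}=\emptyset\}$ is everywhere of second category. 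Picking $y_0\in T_{n_0,m_0}\cap P_{k_0}$ produces two \emph{disjoint} subfamilies (disjoint because $y_0\in B_\alpha^{(n_0)}$ forces $B_\alpha^{(n_0)}\cap P_{k_0}\neq\emptyset$) whose unions are both everywhere of second category in a common basic set. You should either carry out such a direct combinatorial construction or supply a complete proof that each $\mathcal B_B$ has the Baire property in $2^{\Gamma}$; as it stands the central step is a conjecture, not an argument.
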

	In the same paper, they gave an updated proof of theorem 1.4 keeping as close as possible to the original proof of Kunugi. This paper attempts to solve Kuratowski's problem in category bases which generalizes Kunugi's theorem.
		
			\section{PRELIMINARIES AND RESULTS}
		The concept of  category base is a generalization of both measure and topology. Its main objective is to present both measure and Baire category (topology) and some other aspects of point set classification within a common framework. It was introduced by J.C.Morgan II [8] in the seventies of the last century and since then has been developed through a series of papers [9], [10], [11], [12] etc.\\
		
		The relevant definitions (needed for our purpose) are as follows $-$
		
		\begin{definition}
			A pair $(X,\mathcal{C}$) where $X$ is a non-empty set and $\mathcal{C}$ is a family of subsets of $X$ is called a category base if the non-empty members of $\mathcal{C}$ called regions satisfy the following set of axioms :
			\begin{enumerate}
				\item Every point of $X$ is contained in at least one region; i.e., $X = \cup$ $\mathcal{C}$.
				\item Let $A$ be a region and $\mathcal{D}$ be any non-empty family of disjont regions having cardinality less than the cardinality of $\mathcal{C}$.\\
				i) If $A \cap ( \cup \mathcal{D}$) contains a region, then there exists a region $D\in\mathcal{D}$ such that $A\cap D$ contains a region. \\
				ii)  If $A\cap(\cup \mathcal{D})$ contains no region, then there exists a region $B\subseteq A$ which is disjoint from every region in $\mathcal{D}$.
			\end{enumerate}
		\end{definition}
	Several examples of category bases may be found in the book of Morgan and also in the papers [9], [10].
		\begin{definition}
			$[8]$ In a category base ($X,\mathcal{C}$), a set is called `singular' if every region contains a region which is disjoint from the set itself. A set which can be expressed as a countable union of singluar sets is called `meager'. Otherwise, it is called `abundant'. A set is called `Baire' if every region contains a subregion in which either the set or its complement is meager.
		\end{definition}
		A category base in which every region is abundant, is called a `Baire Base'.
		\begin{definition}
			$[8]$ In a category base ($X,\mathcal{C}$), a subfamily $\mathcal{B}$ of regions is called a `pseudobase' if every region contains a region from $\mathcal{B}$. In fact, every category base is a pseudobase of itself.
		\end{definition}
	\newpage
		The following theorem generalizes Theorem 1.5.
		\begin{theorem}
			Let ($X,\mathcal{C}$) be a Baire base satisfying the following property\\ $(\star)$ for every region $A$, the induced category base ($A,\mathcal{C}_{|A}$) has a countable pseudobase.\\
			Then for every family \{$A_{\alpha}:\alpha\in\Lambda$\} of disjoint meager sets whose unoin is abundant in ($X,\mathcal{C}$), there exist two subfamilies \{$A_\alpha:\alpha\in\Lambda_1$\} and \{$A_\alpha:\alpha\in\Lambda_2$\} whose union is $\bigcup\limits_{\alpha\in\Lambda}A_\alpha$ and a region $C$ such that both $\bigcup\limits_{\alpha\in\Lambda_1}A_\alpha$ and $\bigcup\limits_{\alpha\in\Lambda_2}A_\alpha$ are abundant everywhere in $C$.
		\end{theorem}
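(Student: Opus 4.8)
The plan is to transfer the pigeonhole argument behind Theorem 1.6 into the abstract setting by replacing ``second countable'' with the countable pseudobases guaranteed by condition $(\star)$, and ``meager'' by the intrinsic notion of the base. First I would well-order the index family by an initial ordinal $\Omega$ representing $\operatorname{card}\Lambda$, chosen so that $\Omega$ is the \emph{least} ordinal with $\bigcup_{\alpha<\Omega}A_\alpha$ abundant; this forces every proper initial union $B_\alpha=\bigcup_{\beta<\alpha}A_\beta$ ($\alpha<\Omega$) to be meager, hence expressible as $\bigcup_n B_\alpha^{(n)}$ with each $B_\alpha^{(n)}$ singular. The point of this decomposition is that for a fixed $y$ in the union, the sets $A_\alpha$ with $y\in B_\alpha^{(n)}$ for some $n$ are exactly those with $\alpha\ge\gamma$, where $\gamma$ is the least ordinal with $y\in B_\gamma$; so $\bigcup_n\mathcal{U}_{n,y}$, with $\mathcal{U}_{n,y}=\bigcup\{A_\alpha:y\in B_\alpha^{(n)}\}$, differs from the whole abundant union only by the meager tail $B_\gamma$ and is therefore abundant everywhere wherever the whole union is.

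Next I would run a double pigeonhole over a countable pseudobase. Using the Fundamental theorem I localize to a region $D$ in which $\bigcup_{\alpha<\Omega}A_\alpha$ is abundant everywhere, and fix a countable pseudobase $\{D_n\}$ of $(D,\mathcal{C}_{|D})$. For each $y$, abundance-everywhere of the countable union $\bigcup_n\mathcal{U}_{n,y}$ forces some single term $\mathcal{U}_{n_y,y}$ to be abundant in a region, and refining through the pseudobase and the Fundamental theorem makes it abundant everywhere in some $D_{m_y}$. Sorting the points by the countably many pairs $(n,m)$ into the sets $T_{m,n}$ and applying the same ``a countable union abundant everywhere forces one abundant term'' principle yields a pair $(n_0,m_0)$ and a region $P_{m_0}\subseteq D_{m_0}$ in which $T_{n_0,m_0}$ is abundant everywhere.

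The decisive second stage is to manufacture, inside a pseudobase element of $P_{m_0}$, a companion subfamily disjoint from the one coming from $\mathcal{U}_{n_0,\,\cdot}$. Taking a countable pseudobase $\{P_k^{(m_0)}\}$ of $P_{m_0}$ and setting $M_k=\bigcup\{A_\alpha:B_\alpha^{(n_0)}\cap P_k^{(m_0)}=\emptyset\}$, I would check that $\bigcup_k M_k$ recovers the whole union: each singular $B_\alpha^{(n_0)}$ misses some subregion of $P_{m_0}$, hence some pseudobase element $P_k^{(m_0)}$, so every $A_\alpha$ lands in some $M_k$. Pigeonholing once more gives $k_0$ with $M_{k_0}$ abundant everywhere in $P_{k_0}^{(m_0)}$; since $T_{n_0,m_0}$ is abundant everywhere in $P_{m_0}$ it meets $P_{k_0}^{(m_0)}$, so I may pick $y_0\in T_{n_0,m_0}\cap P_{k_0}^{(m_0)}$. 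Then $\mathcal{U}_{n_0,y_0}$ is abundant everywhere in $P_{k_0}^{(m_0)}$ (being abundant everywhere in $D_{m_0}\supseteq P_{m_0}\supseteq P_{k_0}^{(m_0)}$), and $M_{k_0}$ is abundant everywhere there too.

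The hard part is the simultaneous requirement that these two unions be \emph{disjoint} while both stay abundant everywhere in one common region, and this is exactly where the placement $y_0\in P_{k_0}^{(m_0)}$ earns its keep: if $A_\alpha$ contributes to $\mathcal{U}_{n_0,y_0}$ then $y_0\in B_\alpha^{(n_0)}\cap P_{k_0}^{(m_0)}\neq\emptyset$, which disqualifies $A_\alpha$ from $M_{k_0}$, so the two index sets are disjoint and, the $A_\alpha$ being pairwise disjoint, so are the unions. To finish I would set $\Lambda_2=\{\alpha:B_\alpha^{(n_0)}\cap P_{k_0}^{(m_0)}=\emptyset\}$ and $\Lambda_1=\Lambda\setminus\Lambda_2$; since $\Lambda_1$ contains the index set of $\mathcal{U}_{n_0,y_0}$, enlarging the family only preserves abundance, so with $C=P_{k_0}^{(m_0)}$ both $\bigcup_{\alpha\in\Lambda_1}A_\alpha$ and $\bigcup_{\alpha\in\Lambda_2}A_\alpha$ are abundant everywhere in $C$, which is the assertion.
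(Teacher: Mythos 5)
Your proposal is correct and follows essentially the same route as the paper's own deduction: the same choice of the least ordinal $\Omega$ with $\bigcup_{\alpha<\Omega}A_\alpha$ abundant, the same singular decomposition $B_\alpha=\bigcup_n B_\alpha^{(n)}$, the same auxiliary sets $\mathcal{U}_{n,y}$, $T_{m,n}$ and $M_k$, the same double pigeonhole over countable pseudobases via the Fundamental theorem, and the same selection of $y_0\in T_{n_0,m_0}\cap P_{k_0}^{(m_0)}$ to force disjointness of the two subfamilies. If anything, you make explicit two points the paper glosses over, namely why $\bigcup_k M_k$ exhausts the whole union and how the two disjoint subfamilies are completed to a partition of $\Lambda$ with $C=P_{k_0}^{(m_0)}$.
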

	\begin{proof}
		Let $\Omega$ be the least ordinal representing the cardinality $|\Lambda|$ of $\Lambda$. Without loss in generality, we may assume that $\Omega$ is also the least ordinal for which $\bigcup\limits_{\alpha<\Omega}A_{\alpha}$ is abundant.\\
		Now let $B_\alpha=\bigcup\limits_{\beta<\alpha}A_\beta=\bigcup\limits_{n=1}^{\infty}B_{\alpha}^{(n)}$ ($\alpha<\Omega$), where $B_{\alpha}^{(n)}$ are singular sets in $X$ (this way of representing of $B_\alpha$ is justified according to the choice of the ordinal $\Omega$). For any $y\in\bigcup\limits_{\alpha<\Omega}A_\alpha$, one can find $\alpha$ such that $y\in B_\beta$ for every $\beta>\alpha$. If $\gamma$ is the least ordinal for which $y\in B_\gamma$, then $\bigcup\limits_{n=1}^{\infty}\mathcal{U}_{n,y}=\bigcup\limits_{\alpha<\Omega}A_\alpha-\bigcup\limits_{\alpha<\gamma}A_\alpha$ where $\mathcal{U}_{n,y}=\bigcup\{A_\alpha:y\in B_{\alpha}^{(n)}\}$ which is abundant in every region in which $\bigcup\limits_{\alpha<\Omega}A_\alpha$ is abundant.\\
			By the Fundamental theorem (Ch 1, II, F, [8]), $\bigcup\limits_{\alpha<\Omega}A_\alpha$ is abundant everywhere in some region $D$ (say). Consequently, for every $y\in\bigcup\limits_{\alpha<\Omega}A_\alpha$, $\bigcup\limits_{n=1}^{\infty}\mathcal{U}_{n,y}$ is abundant everywhere in $D$ and if \{$D_n:n\in\mathbb{N}$\} ($\mathbb{N}$ is the set of natural numbers) constitutes a countable pseudobase for ($D,\mathcal{C}_{|D}$), then for every $y$ there is a pair ($n_y,m_y)\in\mathbb{N}\times\mathbb{N}$ such that $\mathcal{U}_{n_y,y}$ is abundant everywhere in $D_{m_y}$. We set $T_{m,n}=\{y\in\bigcup\limits_{\alpha<\Omega}A_\alpha: \mathcal{U}_{n_y,y}$ is abundant everywhere in  $D_{m_y}; n_y=n, m_y=m $\} so that $\bigcup\limits_{m,n=1}^{\infty}T_{m,n}$ is abundant everywhere in $D$. Consequently, there exists a subregion $P_{m_0}$ of $D_{m_0}$ such that $T_{n_0,m_0}$ is abundant everywhere in $P_{m_0}$. In otherwords, $T_{n_0,m_0}=\{y\in\bigcup\limits_{\alpha<\Omega}A_\alpha: \bigcup\{A_\alpha: y\in B_{\alpha}^{(n_0)}\}$ is abundant everywhere in $D_{m_0}$\} is abundant everywhere in $P_{m_0}$.\\
			
			Let \{$P_{k}^{(m_0)}: k\in\mathbb{N}$\} be a countable pseudobase for ($P_{m_0},\mathcal{C}_{| P_{m_0}}$). We set $M_k=\bigcup\{A_\alpha:B_{\alpha}^{(n_0)}\cap P_{k}^{(m_0)}=\phi$\}. Since $\bigcup\limits_{k=1}^{\infty}M_k=\bigcup\limits_{\alpha<\Omega}A_\alpha$, there exists $k_0\in\mathbb{N}$ such that $M_{k_0}$ is abundant everywhere in $P_{k_0}^{(m_0)}$. Choose $y\in T_{n_0,m_0}\cap P_{k_0}^{m_0}$ and label it as $y_0$. Then $\bigcup\{A_\alpha:y_0\in B_{\alpha}^{(n_0)}\}=\mathcal{U}_{n_0,y_0}$ and $M_{k_0}$ which are obviously disjoint unions are both abundant in $P_{k_0}^{(m_0)}$.\\
			From the above deductions, we finally conclude that there exists a region $C$ and a pair ($\Lambda_1,\Lambda_2$) such that $\Lambda_1\cup\Lambda_2=\Lambda$, $\Lambda_1\cap\Lambda_2=\phi$ and both $\bigcup\limits_{\alpha\in\Lambda_1}A_\alpha$ and $\bigcup\limits_{\alpha\in\Lambda_2}A_\alpha$ are abundant everywhere in $C$.\\
			This proves the theorem.
	\end{proof}
An easy consequence of the above theorem is this that none of the unions $\bigcup\limits_{\alpha\in\Lambda_1}A_{\alpha}$ and $\bigcup\limits_{\alpha\in\Lambda_2}A_{\alpha}$ are Baire sets in the category base ($X,\mathcal{C}$).\\

As a part of our attempt to solve Kuratowski's problem, we now embark on our formulation of Kunugi's theorem in the general settings of category bases. To start with, we assume that the co-domain space $Y$ of our function $f:X\mapsto Y$ is a topological space satisfying the following condition : \\
($\star\star$) $Y=\bigcup\limits_{n,k=1}^{\infty}D_{n,k}$ (where $D_{n,k}=\bigcup\limits_{\xi}D_{n,k}^{(\xi)}$) is a disjoint union of Borel sets such that 
\begin{enumerate}[(i)]
	\item The union $\bigcup\limits_{\xi'}D_{n,k}^{(\xi')}$ of any subfamily is also a Borel set .
	\item Every nonempty open subset $G$ of $Y$ contains some $D_{n,k}^{(\xi)}$, so that $G$ can be expressed as union of subfamilies \{$D_{n,k}^{(\xi')}$\} of \{$D_{n,k}^{(\xi)}$\} for which $D_{n,k}^{(\xi')}\subseteq G$. More precisely, there exists a set $\Sigma\subseteq\mathbb{N}\times\mathbb{N}$ such that $G=\bigcup\limits_{(n,k)\in\Sigma}\bigcup\limits_{\xi'}D_{n,k}^{(\xi')}$
\end{enumerate}
By virtue of Montegomery lemma (in the form it is used in [4]), any non-separable metric space is a topological space of the above type.
		
\begin{theorem}
	Let ($X,\mathcal{C}$) be a Baire base where nonempty intersection of any two region is not singular and satisfying condition ($\star$) stated in theorem 2.4 and $f:X\mapsto Y$  is a Baire function from $X$ into  a topological space $Y$ satisfying condition ($\star$$\star$) stated above. Then there exists a meager set $P$ such that the restriction $f_{|X-P}$ has the following property : For every nonempty open subset $H$ of $Y$, $f_{|X-P}^{-1}(H)=\bigcup\limits_{n=1}^{\infty}(\bigcup\mathcal{C}_{H}^{(n)})-P$, where for each $n\in\mathbb{N}$, $\bigcup\mathcal{C}_{H}^{(n)}$ is a disjoint union of regions.	
\end{theorem}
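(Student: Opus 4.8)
The plan is to reduce the statement to the countable family of ``levels'' $D_{n,k}$ supplied by $(\star)(\star)$ and to assemble $P$ by discarding, level by level, the points where the preimage fails to be region-like. Since $f$ is a Baire function, $f^{-1}$ carries open sets to Baire sets, and because the Baire sets form a $\sigma$-algebra on which $f^{-1}$ commutes with complementation and countable unions, $f^{-1}(B)$ is Baire for every Borel $B\subseteq Y$. Fixing $(n,k)$ and writing $A_\xi=f^{-1}(D_{n,k}^{(\xi)})$, I obtain a disjoint family of Baire sets whose union $f^{-1}(D_{n,k})$ is Baire. For each $\xi$ I let $G_\xi$ be the regular kernel of $A_\xi$, namely the union of all regions in which $A_\xi$ is co-meager, and set $G_{n,k}=\bigcup_\xi G_\xi$. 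The asserted representation will be obtained by matching $f^{-1}(D_{n,k})$ with $G_{n,k}$ modulo a meager set.

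The core of the argument, and the step I expect to carry the most weight, is the claim that the missing part $f^{-1}(D_{n,k})\setminus G_{n,k}$ is meager; I would prove it by contradiction through the unification Theorem 2.4. If this set were abundant, then by the Fundamental theorem it would be abundant everywhere in some region $C$, and there its trace decomposes as the disjoint union over $\xi$ of the sets $(A_\xi\setminus G_{n,k})\cap C$, each of which is meager (for a meager $A_\xi$ this is clear, while for an abundant $A_\xi$ one has $A_\xi\setminus G_\xi$ meager by the Baire property, and $G_\xi\subseteq G_{n,k}$). Theorem 2.4 then splits the index set into $\Lambda_1\cup\Lambda_2$ and produces a region $C'$ in which $\bigcup_{\xi\in\Lambda_i}[(A_\xi\setminus G_{n,k})\cap C]$ is abundant everywhere for $i=1,2$. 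Here condition $(\star)(\star)$(i) enters decisively: each $E_i=\bigcup_{\xi\in\Lambda_i}D_{n,k}^{(\xi)}$ is Borel, so $f^{-1}(E_i)$ is Baire; these two preimages are disjoint and contain the respective abundant-everywhere sets, hence both $f^{-1}(E_1)$ and $f^{-1}(E_2)$ are abundant everywhere in $C'$. But two disjoint Baire sets cannot both be abundant everywhere in one region (the mechanism behind Corollary 2.5), a contradiction. Thus every missing part is meager.

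The passage from one level to a single meager $P$ rests on a structural fact that I regard as the crux: the kernels are pairwise disjoint, across all $(n,k)$ and all $\xi$. Indeed, suppose a point lay in $G_\xi\cap G_\eta$ for two distinct pieces $D_{n,k}^{(\xi)}$ and $D_{n',k'}^{(\eta)}$, with witnessing regions $R$ (co-meager for $A_\xi$) and $R'$ (co-meager for $A_\eta$). Their nonempty intersection is not singular by hypothesis, and under the axioms this forces $R\cap R'$ to contain a region $Q$; on $Q$ both $A_\xi$ and $A_\eta$ are co-meager, so $A_\xi\cap A_\eta$ is co-meager in the abundant region $Q$, hence nonempty, producing a point that maps into $D_{n,k}^{(\xi)}\cap D_{n',k'}^{(\eta)}=\emptyset$ -- impossible. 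Disjointness of kernels then drives each excess part $G_{n,k}\setminus f^{-1}(D_{n,k})$ into the union of the missing parts of the other levels: such a point has its $f$-value in some other $D_{n',k'}$ yet lies outside $G_{n',k'}$, so it belongs to $f^{-1}(D_{n',k'})\setminus G_{n',k'}$. Consequently both the missing and the excess parts, summed over the countably many levels, form a meager set.

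To finish I would, for each level, fix a maximal pairwise-disjoint family $\mathcal{R}_{n,k}$ of regions each co-meager in a single $A_\xi$; by maximality its union is co-meager in $G_{n,k}$, leaving one meager remainder $L_{n,k}$ per level. Taking $P$ to be the union of all missing parts, excess parts and remainders $L_{n,k}$ gives a meager set. For a nonempty open $H$, condition $(\star)(\star)$(ii) writes $H=\bigcup_{(n,k)\in\Sigma}\bigcup_{\xi'}D_{n,k}^{(\xi')}$ with $\Sigma\subseteq\mathbb{N}\times\mathbb{N}$ countable; enumerating $\Sigma$ as $\{(n_j,k_j)\}$ and letting $\mathcal{C}_H^{(j)}$ be those regions of $\mathcal{R}_{n_j,k_j}$ co-meager in some $A_{\xi'}$ with $D_{n_j,k_j}^{(\xi')}\subseteq H$, each $\bigcup\mathcal{C}_H^{(j)}$ is a disjoint union of regions. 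A direct check, again using disjointness of kernels, yields $f^{-1}(H)\,\triangle\,\bigcup_j(\bigcup\mathcal{C}_H^{(j)})\subseteq P$, which is exactly $f_{|X-P}^{-1}(H)=\bigcup_j(\bigcup\mathcal{C}_H^{(j)})-P$. The main obstacle throughout is that the index set may be uncountable; it is overcome by the Kuratowski-type Theorem 2.4 together with the kernel-disjointness of the third paragraph, which reduce every potentially uncountable union of meager sets either to a disjoint pair of Borel preimages or to a countable family indexed by the levels.
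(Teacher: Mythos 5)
Your strategy is essentially the paper's: decompose each $f^{-1}(D_{n,k}^{(\xi)})$ into a union of regions modulo meager error (your ``regular kernel'' plus a maximal disjoint refinement is exactly the content of the structure theorem for Baire sets, Th 7, III, E, Ch 1 of [9], that the paper cites), use the hypothesis on intersections of regions to make these region-parts pairwise disjoint across pieces, invoke the unification Theorem 2.4/Corollary 2.5 to show that the aggregated error over the possibly uncountable index $\xi$ is meager (your ``two disjoint Baire sets cannot both be abundant everywhere in one region'' is the same contradiction the paper reaches by exhibiting a subfamily union that Corollary 2.5 declares non-Baire but that is visibly a Baire set), and then sum over the countably many levels $(n,k)$. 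Two points need repair. First, in the kernel-disjointness step you assert that the non-singular set $R\cap R'$ must contain a region $Q$; that implication is false in a general category base (non-singularity only says that some region has no subregion disjoint from $R\cap R'$). What is actually available is that $R\cap R'\subseteq(R\setminus A_\xi)\cup(R'\setminus A_\eta)$ is meager, and this must be played off against the hypothesis on intersections of regions; the paper is equally terse at the corresponding step, but your stated justification is not a correct deduction.

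Second, and more concretely, your $P$ is too small. The final symmetric-difference check requires controlling, within a single level $(n,k)$, the per-piece errors $R\setminus A_{\xi_R}$ for $R\in\mathcal{R}_{n,k}$: a point of $R\setminus A_{\xi_R}$ may lie in $A_\eta$ for a different piece $\eta$ of the same level, in which case it belongs to $G_{n,k}\cap f^{-1}(D_{n,k})$ and is caught neither by your missing parts, nor by your excess parts, nor by $L_{n,k}$ --- yet such a point violates $f^{-1}(H)\,\triangle\,\bigcup_j(\bigcup\mathcal{C}_H^{(j)})\subseteq P$ (in both directions, depending on whether $D_{n,k}^{(\xi_R)}$ or $D_{n,k}^{(\eta)}$ sits inside $H$). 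You must add $\bigcup_{R\in\mathcal{R}_{n,k}}(R\setminus A_{\xi_R})$ to $P$ for each level; this is the analogue of the paper's $\bigcup_\xi F_{n,k}^{(\xi)}$, and it is meager by the lemma --- proved inside the paper's argument and needed by you in any case --- that a union of meager subsets of pairwise disjoint regions is meager. With that summand added, your argument goes through and coincides in substance with the paper's proof.
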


		\begin{proof}
			For each pair $(n,k)\in\mathbb{N}\times\mathbb{N}$, since $D_{n,k}=\bigcup\limits_{\xi}D_{n,k}^{(\xi)}$ is a union of Borel sets, so $f^{-1}(D_{n,k})=\bigcup\limits_{\xi}f^{-1}(D_{n,k}^{(\xi)})=\bigcup\limits_{\xi}(M_{n,k}^{(\xi)}-F_{n,k}^{(\xi)})\cup T_{n,k}^{(\xi)}$, where $M_{n,k}^{(\xi)}$ are regions and $F_{n,k}^{(\xi)}$, $T_{n,k}^{(\xi)}$ ($F_{n,k}^{(\xi)}\subseteq M_{n,k}^{(\xi)}; F_{n,k}^{(\xi)}\cap T_{n,k}^{(\xi)}=\phi$) are meager sets (this presentation is a part of the proof of Th 7, III, E, Ch 1, [8]). Moreover, as $D_{n,k}^{(\xi_1)}\cap D_{n,k}^{(\xi_2)}=\phi$ for $\xi_1\neq\xi_2$ from the hypothesis and the fact that ($X,\mathcal{C}$) is a Baire base, it follows that $M_{n,k}^{(\xi_1)}\cap M_{n,k}^{(\xi_2)}=\phi$.\\
			We now check that $\bigcup\limits_\xi F_{n,k}^{(\xi)}$ is also meager. For this we need only establish that if \{$C_\alpha\}_{\alpha\in\Lambda}$ is a family of disjoint regions and $B_\alpha(\subseteq C_\alpha)$ are meager, then $\bigcup\limits_{\alpha\in\Lambda}B_\alpha$ is also meager. But this is a consequence of the fact that $B_\alpha=\bigcup\limits_{n=1}^{\infty}B_{\alpha}^{(n)}$ where $B_{\alpha}^{(n)}$ are singular and therefore $\bigcup\limits_{\alpha\in\Lambda}B_{\alpha}^{(n)}$ is singular for each $n$ which may be derived from the definition of a category base.\\
			Now $\bigcup\limits_{\xi}f^{-1}(D_{n,k}^{(\xi)})=\bigcup\limits_{\xi}(M_{n,k}^{(\xi)}-F_{n,k}^{(\xi)})\cup T_{n,k}^{(\xi)}$ which by (i) of ($\star\star$) and the definition of Baire function can be expressed as ($M_{n,k}-F_{n,k})\cup T_{n,k}$ where $M_{n,k}$ is the union of disjoint regions and $F_{n,k}, T_{n,k}(F_{n,k}\subseteq M_{n,k}; T_{n,k}\cap M_{n,k}=\phi$) are both meager. Further, if we set $L_{n,k}=\bigcup\limits_{\xi}M_{n,k}^{(\xi)}$, then $\bigcup\limits_\xi T_{n,k}^{(\xi)}\subseteq \bigcup\limits_\xi F_{n,k}^{(\xi)}\cup\{(M_{n,k}-F_{n,k})-L_{n,k}\}\cup T_{n,k}$. Let $S_{n,k}=(M_{n,k}-F_{n,k})-L_{n,k}$. In order to show that $\bigcup\limits_\xi T_{n,k}^{(\xi)}$ is meager, we need only to establish that $S_{n,k}$ is meager.\\
			If possible, let $S_{n,k}$ be abundant. Then $\bigcup\limits_\xi(S_{n,k}\cap T_{n,k}^{(\xi)})$ is also abundant expressed as the disjoint union of meager sets. So by the conclusion drawn at the end of the proof of theorem 2.4, there is a subfamily \{$T_{n,k}^{(\xi')}$\} of \{$T_{n,k}^{(\xi)}$\} such that $\bigcup\limits_{\xi'}(S_{n,k}\cap T_{n,k}^{(\xi')})$ is non-Baire. But this is impossible, beacause $\bigcup\limits_{\xi'}(S_{n,k}\cap T_{n,k}^{(\xi')})=S_{n,k}\cap(\bigcup\limits_{\xi'}T_{n,k}^{(\xi')})=S_{n,k}\cap(\bigcup\limits_{\xi'}\{(M_{n,k}^{(\xi')}-F_{n,k}^{(\xi')})\cup T_{n,k}^{(\xi')}\})=S_{n,k}\cap (\bigcup\limits_{\xi'}f^{-1}(D_{n,k}^{(\xi')}))=S_{n,k}\cup f^{-1}(\bigcup\limits_{\xi'}D_{n,k}^{\xi'})$ which is obviously a Baire set because by hypothesis $\bigcup\limits_{\xi'}D_{n,k}^{(\xi')}$ is Borel.\\
			Hence for each ($n,k$), ($\bigcup\limits_\xi F_{n,k}^{(\xi)})\cup(\bigcup\limits_\xi T_{n,k}^{(\xi)})$ is meager. Since by (ii) of ($\star$$\star$), given any open set $H$, there exists $\Sigma\subseteq\mathbb{N}\times\mathbb{N}$ such that $H=\bigcup\limits_{(n,k)\in\Sigma}\bigcup\limits_{\xi'}D_{n,k}^{(\xi')}$, therefore, upon setting $P=\bigcup\limits_{(n,k)\in\Sigma}\bigcup\limits_{\xi}(F_{n,k}^{(\xi)}\cup T_{n,k}^{(\xi)})$ which is meager, we may write $f_{|X-P}^{-1}(H)=(\bigcup\limits_{(n,k)\in\Sigma}\bigcup\limits_{\xi'}M_{n,k}^{\xi'})-P$ which may be finally expressed as $f_{|X-P}^{-1}(H)=\bigcup\limits_n(\bigcup \mathcal{C}_{H}^{(n)})-P$ where $\bigcup\limits_n$ is taken over a subset of $\mathbb{N}$ and for each $n$ in that union, the symbol $\bigcup\mathcal{C}_{H}^{(n)}$ stands for the union of a subfamily of disjoint regions.\\
			Hence the theorem.	
		\end{proof}
	
	Here it may be noted that if the category base is a second countable Baire topological space, then the hypothesis of Theorem 2.5 is obviously satisfied and in that case it follows from the conclusion of the theorem that any function (having Baire property from a second countable topological space into a metric space) is continuous except on a set of first category in $X$.

		\bibliographystyle{plain}

	\end{document}